\newtheorem{theorem}{Theorem}[section]
\newtheorem{proposition}[theorem]{Proposition}
\newtheorem{corollary}[theorem]{Corollary}
\def\bb #1{ {\mathbb #1} }
\def\c #1{ {\mathcal #1} }
\def\deg { \text{deg} }
\begin{document}
\title{Kloosterman sums and Hecke polynomials \\ in characteristics 2 and 3}

\author{C. Douglas Haessig\footnote{This work was partially supported by a grant from the Simons Foundation \#314961.}}

\maketitle

\abstract{In this paper we give a modular interpretation of the $k$-th symmetric power $L$-function of the Kloosterman family of exponential sums in characteristics 2 and 3, and in the case of $p=2$ and $k$ odd give the precise 2-adic Newton polygon. We also give a $p$-adic modular interpretation of Dwork's unit root $L$-function of the Kloosterman family, and give the precise 2-adic Newton polygon when $k$ is odd.

In a previous paper, we gave an estimate for the $q$-adic Newton polygon of the symmetric power $L$-function of the Kloosterman family when $p \geq 5$. We discuss how this restriction on primes was not needed, and so the results of that paper hold for all $p \geq 2$.}

\section{Introduction}

Fix $p$ a prime number and a primitive $p$-th root of unity $\zeta_p$. For each $t \in \overline{\bb F}_p^*$ and $m \in \bb Z_{\geq 1}$ we define the Kloosterman exponential sum
\[
Kl_m(t) := \sum_{x \in \bb F_{q^m}^*} \zeta_p^{Tr_{\bb F_{q^m} / \bb F_p}(x + \frac{t}{x})} \in \bb R
\]
where $q = p^{\deg(t)}$ and $\deg(t) := [\bb F_p(t) : \bb F_p]$. It is well-known that the associated $L$-function is a quadratic polynomial 
\[
L(t, T) := \exp\left( \sum_{m=1}^\infty Kl_m(t) \frac{T^m}{m} \right) = (1 - \pi_0(t) T)(1 - \pi_1(t) T)
\]
whose reciprocal roots satisfy $| \pi_0(t) | = | \pi_1(t) | = \sqrt{p^{\deg(t)}}$ and $\pi_0(t) \pi_1(t) = p^{\deg(t)}$. For $k$ a positive integer, define the $k$-th symmetric power $L$-function of the family by 
\[
L(Sym^k Kl, s) := \prod_{t \in | \bb G_m / \bb F_p |} \prod_{i = 0}^k \frac{1}{1 - \pi_0(t)^{k-i} \pi_1(t)^i s^{\deg(t)}},
\]
where the first product runs over all closed points of the algebraic torus defined over $\bb F_p$. The main result of this paper is to give a modular interpretation of this $L$-function in the cases $p = 2$ and 3. To state the result we need some notation. For a congruence subgroup $\Gamma$ of $SL_2(\bb Z)$ and character $\chi$, denote the space of cuspforms of level $\Gamma$ and weight $k$ by $S_k(\Gamma, \chi)$, and $S_k(\Gamma)$ if $\chi$ is trivial. Denote by $S_k^\text{new}(\Gamma, \chi)$ the new subspace of $S_k(\Gamma, \chi)$. Let $T_n$ be the $n$-th Hecke operator and $U_p$ Atkin's $U$-operator.

\begin{theorem}\label{T: general}
For $p = 2$ set $\Gamma := \Gamma_1(4)$, and for $p = 3$ set $\Gamma := \Gamma_1(3)$. Then for every $k \geq 1$
\[
L(Sym^k Kl, s) = (1 - s) det(1 - s U_p \mid S_{k+2}(\Gamma)).
\]
Writing $L(Sym^k Kl, s) = \sum a_m s^m$ then the $p$-adic Newton polygon satisfies $ord_p \> a_m \geq m(m-1)$. If $p = 2$ and $k$ is odd, then this is an equality, $ord_p \> a_m = m(m-1)$.
\end{theorem}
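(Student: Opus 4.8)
The plan is to use the modular interpretation $L(Sym^k Kl, s) = (1-s)\,\det(1 - sU_2 \mid S_{k+2}(\Gamma_1(4)))$ to turn the Newton polygon computation into a statement about the $2$-adic slopes of $U_2$, and then to make everything explicit using the fact that $X_1(4)$ has genus $0$. The reciprocal roots of $L(Sym^k Kl, s)$ are $1$ (from the factor $1-s$, contributing the single slope $0$) together with the eigenvalues of $U_2$ on $S_{k+2}(\Gamma_1(4))$. Writing $d+1$ for the degree of $L(Sym^k Kl, s)$, the claimed equality $ord_2\, a_m = m(m-1)$ is equivalent to asserting that the slopes of $L(Sym^k Kl, s)$ are exactly $0, 2, 4, \ldots, 2d$, i.e.\ that the nonzero eigenvalues of $U_2$ have $2$-adic valuations precisely $2, 4, \ldots, 2d$: the slopes of $U_2$ form an arithmetic progression of common difference $2$. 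Since the lower bound $ord_2\, a_m \geq m(m-1)$ is already in hand, what remains is the matching upper bound for every $m$.

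It is worth stressing that this upper bound must be proved slope-by-slope and cannot be deduced from the lower bound together with the total $ord_2\, a_{d+1} = (d+1)d$ alone: a nondecreasing slope sequence can clear the lower bound and hit the correct endpoint without being the target progression (for $d = 2$ the slopes $0,3,3$ already illustrate this, having partial sums $0,3,6$ which dominate the Hodge values $0,2,6$). Equivalently, one must rule out the Newton polygon bulging above the Hodge polygon in the interior, which requires exhibiting, for each $j$, an explicit term of $2$-adic valuation exactly $j(j+1)$ in the $j$-th characteristic coefficient of $U_2$ that does not cancel.

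To carry this out I would first fix an integral model adapted to the hypothesis that $k$ is odd. For odd weight $k+2$ every form in $S_{k+2}(\Gamma_1(4))$ carries the quadratic nebentypus $\chi_{-4}$ of conductor $4$, which constrains the support of its $q$-expansion modulo powers of $2$; moreover the ring of modular forms on $\Gamma_1(4)$ is generated by a few explicit low-weight theta series whose Fourier coefficients I can track $2$-adically. From these I would write down an explicit $\mathbb{Z}_2$-basis $f_1, \ldots, f_d$ of $S_{k+2}(\Gamma_1(4))$. Because $U_2$ acts on $q$-expansions by $U_2\bigl(\sum a_n q^n\bigr) = \sum a_{2n} q^n$, its matrix in this basis has entries that are explicit integral combinations of the $f_i$'s Fourier coefficients, hence are computable modulo any fixed power of $2$. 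I would then compute the valuations of the characteristic coefficients $c_j$ of $U_2$ directly and aim to show $ord_2\, c_j = j(j+1)$. I expect the requisite valuation-increasing structure to originate from the interaction of $U_2$ with the theta operator $\theta = q\,d/dq$, for which $U_2\theta = 2\,\theta U_2$, together with the ramification of $\chi_{-4}$ at $p=2$; the functional equation for $L(Sym^k Kl, s)$ makes the Newton polygon symmetric, which I would exploit to halve the work by certifying only the slopes below the center.

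The step I expect to be the main obstacle is precisely this non-cancellation: proving that the relevant $2$-adic valuations of the $U_2$-matrix entries separate into the distinct even integers $2, 4, \ldots, 2d$ rather than merely bounding them from below. This is where both hypotheses are indispensable, since it is the conjunction of $p = 2$ and $k$ odd (forcing the fully ramified nebentypus $\chi_{-4}$) that produces the exact even spacing of the slopes, and certifying it will require a careful reduction of $U_2$ modulo a controlled power of $2$ on the genus-zero curve $X_1(4)$, tracking exactly which Fourier coefficients remain units.
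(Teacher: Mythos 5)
Your proposal does not engage with the main content of the theorem. The displayed identity $L(Sym^k Kl, s) = (1-s)\det(1 - sU_p \mid S_{k+2}(\Gamma))$ is itself the principal claim, and you take it as a given starting point; the paper's proof is devoted almost entirely to establishing it, by writing $\log L(Sym^k Kl,s) = \sum S_m s^m/m$, grouping the points $t$ with $r\deg(t)=m$ according to the value $f = -Kl_{p^m}(t)$, invoking the identification $F(q,f) = H(f^2-4q)$ of the frequency of Kloosterman values with Kronecker class numbers (Theorem \ref{T: HK}, via point counts on associated elliptic curves), and matching the resulting expression for $S_m$ against the Eichler--Selberg trace formula for $T_q = U_p^m$ on $S_{k+2}(\Gamma_1(4))$ resp.\ $S_{k+2}(\Gamma_1(3))$ (Theorems \ref{T: trace for p2} and \ref{T: trace for p3}), with a separate check at $p=2$, $m=1$. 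None of this appears in your write-up, so the first assertion of the theorem is unproved. You likewise assume the lower bound $ord_p\, a_m \geq m(m-1)$ is ``already in hand''; in the paper it is deduced from the $p$-adic theory of Theorem \ref{T: sym inf} together with the identity $L(Sym^k Kl,s) = L(Sym^{\infty,k}Kl,s)/L(Sym^{\infty,-(k+2)}Kl,p^{k+1}s)$.

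For the exactness claim at $p=2$ and $k$ odd, your reduction to showing that the nonzero $U_2$-eigenvalues have valuations exactly $2,4,\dots,2d$ is correct, and your observation that the lower bound plus the endpoint do not pin down the interior vertices is a fair and worthwhile point. But what you then offer is a plan, not a proof: you yourself identify the decisive non-cancellation step as ``the main obstacle'' and leave it open. The paper does not attempt this finite-level computation directly; it passes to the overconvergent picture (Theorem \ref{T: overconv interp}) and quotes the known determination of the 2-adic slopes of $U_2$ on overconvergent forms \cite{MR2106238, MR2135280}, results which are themselves proved by the kind of explicit genus-zero computation you envisage. So your strategy for this part is plausible and in the right spirit, but as written it contains no argument where the theorem needs one.
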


\begin{corollary}\label{refined}
Let $p = 2$ or $3$. 
\begin{enumerate}
\item Suppose $k$ is even. Over $\bb Z$, we may factor $L(Sym^k Kl, s) = (1 - s) \cdot P_k(s) \cdot M_k(s)$ where
\begin{align*}
M_k(s) &=  det(1 - s T_p + p^{k+1} s^2 \mid S_{k+2}( SL_2(\bb Z))) \\
P_k(s) &= det (1 - s U_p  \mid S_{k+2}^\text{new}(\Gamma_0(p))) = (1 + p^{k/2} s)^{a_k} \cdot (1 - p^{k/2} s)^{b_k}.
\end{align*}
Further, $M_k$ is pure of weight $k+1$ and satisfies the functional equation $M_k(s) = c s^d M_k(1/p^{k+1} s)$ where $d := \text{deg } M_k$ and $c$ is a constant. The constants $a_k$ and $b_k$ are given in \cite{MR3305309} when $p = 2$, and in \cite{FuWan-$L$-functionssymmetricproducts-2005} when $p = 3$.
\item Suppose $k$ is odd. Then all reciprocal roots of $L(Sym^k Kl, s) = (1 - s) det(1 - s U_p  \mid S_{k+2}(\Gamma))$ except for $s = 1$ are pure of weight $k+1$.
\end{enumerate}
\end{corollary}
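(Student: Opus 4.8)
The plan is to start from the modular identity $L(Sym^k Kl, s) = (1-s)\,det(1 - sU_p \mid S_{k+2}(\Gamma))$ supplied by Theorem \ref{T: general} and to analyze the right-hand side through Atkin--Lehner--Li theory, treating the two parities separately. The first move, common to both, is to split $S_{k+2}(\Gamma)$ by nebentypus: with $\Gamma_1(N)\subset\Gamma_0(N)$ for $N=4$ (when $p=2$) or $N=3$ (when $p=3$), one has $S_{k+2}(\Gamma)=\bigoplus_\chi S_{k+2}(\Gamma_0(N),\chi)$ over the two Dirichlet characters mod $N$. The parity relation $\chi(-1)=(-1)^{k+2}$ then pins down the nebentypus: for $k$ even only the trivial character survives, so $S_{k+2}(\Gamma)=S_{k+2}(\Gamma_0(N))$, while for $k$ odd only the nontrivial quadratic character $\chi$ (of conductor $N$) survives, so $S_{k+2}(\Gamma)=S_{k+2}(\Gamma_0(N),\chi)$.

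For part (1), with $k$ even, I would decompose $S_{k+2}(\Gamma_0(N))$ into oldforms (induced from level $1$ and, when $p=2$, from level $2$) together with the newspace at level $N$, and compute $U_p$ on each old block using $T_p=U_p+p^{k+1}V_p$ and $U_pV_p=\mathrm{id}$. On the block $\langle f, V_pf\rangle$ attached to a level-one eigenform $f$ (the only old block when $p=3$), $U_p$ has characteristic polynomial $X^2-a_p(f)X+p^{k+1}$, contributing $1-a_p(f)s+p^{k+1}s^2$, so multiplying over $f$ gives exactly $M_k(s)=det(1-sT_p+p^{k+1}s^2\mid S_{k+2}(SL_2(\bb Z)))$. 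When $p=2$ the level-one oldspace is $\langle f, V_2f, V_2^2 f\rangle$, and a direct calculation gives $U_2$ the characteristic polynomial $X(X^2-a_2(f)X+2^{k+1})$; the spurious zero eigenvalue drops out of $det(1-sU_2)$ and again leaves $M_k(s)$. Similarly the level-two old blocks $\langle g, V_2g\rangle$ contribute $1-a_2(g)s$ (another zero dropping out), while the level-four newforms carry $a_2=0$, since their local representation at $2$ has conductor exponent $2$ with trivial central character and hence no unramified twist. Collecting terms yields $det(1-sU_p\mid S_{k+2}(\Gamma_0(N)))=M_k(s)P_k(s)$ with $P_k(s)=det(1-sU_p\mid S_{k+2}^{\mathrm{new}}(\Gamma_0(p)))$, the asserted factorization.

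The remaining claims are read off from the eigenvalues. A level-$p$ newform of trivial nebentypus is an unramified twist of Steinberg at $p$, so Atkin--Lehner theory gives $U_p$-eigenvalue $a_p=\pm p^{k/2}$; grouping by sign produces $P_k(s)=(1+p^{k/2}s)^{a_k}(1-p^{k/2}s)^{b_k}$, with $a_k,b_k$ the counts from the cited references. Writing the reciprocal roots of $M_k$ as pairs $\alpha_f,\beta_f$ with $\alpha_f\beta_f=p^{k+1}$, Deligne's bound (Ramanujan--Petersson in level one) gives $|\alpha_f|=|\beta_f|=p^{(k+1)/2}$ under every embedding, so $M_k$ is pure of weight $k+1$; substituting $\beta_f=p^{k+1}/\alpha_f$ into $M_k(1/(p^{k+1}s))$ yields the functional equation $M_k(s)=p^{(k+1)d/2}s^dM_k(1/(p^{k+1}s))$ with $d=\deg M_k$. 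For part (2), with $k$ odd, the character $\chi$ is primitive of conductor $N$, so there are no oldforms and $U_p$ is diagonalized by the newforms with eigenvalues $a_p(f)$. Since the central character is ramified at $p$, whenever $a_p(f)\neq 0$ the local factor is the ramified principal series with one unramified twist and $|a_p(f)|=p^{(k+1)/2}$. Hence every nonzero $U_p$-eigenvalue is pure of weight $k+1$, and with the factor $(1-s)$ this shows all reciprocal roots except $s=1$ are pure of weight $k+1$.

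The step I expect to demand the most care is the $U_p$-bookkeeping on the oldspace when $p=2$, where the level is $4=p^2$: one must check that the zero eigenvalues produced on the three-dimensional level-one blocks, on the level-two blocks, and on the level-four newspace account for the \emph{entire} discrepancy between $S_{k+2}(\Gamma_0(4))$ and the two factors $M_k,P_k$. This hinges on the vanishing $a_2=0$ for level-four newforms of trivial character, and, in the odd case, on the dichotomy that $a_2$ is either $0$ or of absolute value $2^{(k+1)/2}$. Establishing these is precisely where the local representation theory at $p=2$ (conductor exponent $2$, the non-Steinberg ramified cases) enters and is the most delicate input; everything else reduces to the standard Atkin--Lehner--Li formulas and Deligne's bound.
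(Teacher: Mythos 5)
Your argument is correct and rests on the same pillars as the paper's proof (Atkin--Lehner--Li decomposition, the relation $T_p=U_p+p^{k+1}V_p$ on old blocks, Deligne's bound for the level-one piece, and \cite[Theorem 3]{MR369263} for the newform $U_p$-eigenvalues), but in the case $p=2$, $k$ even you organize the computation differently. The paper first invokes \cite[Lemma 1]{MR369263}, which says $U_2$ maps $S_{k+2}(\Gamma_1(4))$ into $S_{k+2}(\Gamma_1(2))$, so that $\det(1-sU_2\mid S_{k+2}(\Gamma_1(4)))=\det(1-sU_2\mid S_{k+2}(\Gamma_1(2)))$; after this descent only the two-dimensional level-one old blocks $\langle f,V_2f\rangle$ and the level-two newspace appear, and the level-four newspace never has to be discussed. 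You instead work directly at level $4$, which forces you to handle the three-dimensional blocks $\langle f,V_2f,V_2^2f\rangle$, the level-two old blocks, and the level-four newforms, and to supply the additional input $a_2=0$ for newforms of level $4$ with trivial nebentypus. That input is again just \cite[Theorem 3]{MR369263} (the case $p^2\mid N$ with unramified character), so nothing essentially new is needed, but the paper's route via the level-lowering lemma is the cleaner bookkeeping; your route has the mild advantage of making the ``missing'' eigenvalues (the zero eigenvalues of $U_2$ on the full level-$4$ space) completely explicit. In part (2) your appeal to the ramified-principal-series dichotomy is slightly more cautious than necessary: since the nebentypus has conductor equal to the full level ($4$ or $3$), Li's theorem gives $|a_p(f)|=p^{(k+1)/2}$ outright, with no $a_p=0$ case to exclude, which is exactly how the paper cites \cite[Theorem 3]{MR369263}.
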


For $p \geq 5$, we do not know of such a modular interpretation, however, the factorization into $P_k$ and $M_k$ is known. That is, from the work of \cite{MR2425148, Robba-SymmetricPowersof-1986, MR3305309} it is known that for all $p$ and $k$ we may factorize $L(Sym^k Kl, s) = (1 - s) \cdot P_k(s) \cdot M_k(s)$ with $P_k(s) = \prod (1 \pm p^{k/2} s)$ and $M_k$ pure of weight $k+1$ satisfying the same functional equation as above. A summary of this may be found in \cite[Section 5]{Fresan2018HodgeTO}. 

When $p \geq 3$ the same lower bound $ord_p \> a_m \geq m(m-1)$ for the Newton polygon was proven in \cite{Fresan2018HodgeTO}, which was an improvement to \cite{MR3694643}. However, in Section \ref{S: padicSym} we discuss how the theory in  \cite{MR3694643} holds for all $p \geq 2$ without change, including the improved lower bound. We expect the precise Newton polygon to be difficult to describe. An easier task would be better lower bounds for the Newton polygon. For example, for $p = 3$ my undergraduate student Guowei Shan conjectures that the 3-adic Newton polygon of $det(1 - s U_3 \mid S_{k+2}(\Gamma_1(3))$ of $L(Sym^k Kl, s)$ is bounded below by $y = (1/2)(3x^2 +x)$.

Symmetric powers played a pivotal role in Wan's proof of Dwork's meromorphy conjecture for unit root $L$-functions \cite{Wan-DworkConjectureunit-1999, Wan-Higherrankcase-2000, Wan-Rankonecase-2000}. The \emph{Kloosterman unit root $L$-function} is defined as follows. Let $\bb Z_p$ denote the $p$-adic integers, and let $\kappa \in \bb Z_p$. For each $t \in \overline{\bb F}_p^*$, the root $\pi_0(t)$ is a $p$-adic 1-unit, and so we may define
\[
L_\text{unit}(\kappa, T) := \prod_{t \in | \bb G_m / \bb F_p|} \frac{1}{1 - \pi_0(t)^\kappa T^{deg(t)}}.
\]
To prove meromorphy, Wan introduced the $p$-adic $\kappa$-symmetric power $L$-function $L(Sym^{\infty, \kappa} Kl, s)$, defined as follows. For a sequence $( k_i )$ of positive integers tending to infinity, and such that $k_i \rightarrow \kappa$ $p$-adically, the limit $\lim_{i \rightarrow \infty} L(Sym^{k_i} Kl, s) = L(Sym^{\infty, \kappa} Kl, s)$ exists and is a formal power series with $p$-adic integral coefficients. In \cite{MR3249829}, it was shown to have the Euler product
\[
L(Sym^{\infty, \kappa} Kl, s) = \prod_{t \in | \bb G_m / \bb F_p |} \prod_{i = 0}^\infty \frac{1}{1 - \pi_0(t)^{k-i} \pi_1(t)^i s^{\deg(t)}}.
\]
In \cite{MR3694643} a $p$-adic cohomology theory was given for $L(Sym^{\infty, \kappa} Kl, s)$ when $p \geq 5$, however, in Section \ref{S: padicSym} we show how it holds for all $p \geq 2$:
\[
L(Sym^{\infty, \kappa} Kl, s) = \frac{det(1 - s \beta_\kappa \mid H_\kappa^1)}{det(1 - p s \beta_\kappa  \mid H^0_\kappa)},
\]
where $\beta_\kappa$ is a completely continuous operator defined on $p$-adic cohomology spaces $H^1_\kappa$ and $H^0_\kappa$ which satisfy: if $\kappa \not= 0$ then $H^0_\kappa = 0$, else if $\kappa = 0$ then $H^0_\kappa$ is 1-dimensional. As a consequence we get the following $p$-adic modular interpretation of $L(Sym^{\infty, \kappa} Kl, s)$. Denote by $S^\dag_\kappa(\Gamma)$ the space of overconvergent $p$-adic cusp forms of level $\Gamma$ and weight $\kappa$. 

\begin{theorem}\label{T: overconv interp}
For $p = 2$ set $\Gamma := \Gamma_1(4)$, and for $p = 3$ set $\Gamma := \Gamma_1(3)$. For $\kappa \in \bb Z_p \setminus \{0\}$, 
\[
L(Sym^{\infty, \kappa} Kl, s) = (1 - s) det(1 - s U_p \mid S^\dag_{\kappa+2}(\Gamma)) = det(1 - s \beta_\kappa \mid H^1_\kappa).
\]
For $p = 2$ and $\kappa \in 1 + 2 \bb Z_2$, the 2-adic Newton polygon of $L(Sym^{\infty, \kappa} Kl, s)$ has 2-adic slopes $2n$ for $n \geq 0$. For $p = 3$, or $p = 2$ and $\kappa \in 2\bb Z_2$, then the slopes are $\geq 2n$ for every $n$.
\end{theorem}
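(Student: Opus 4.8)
The plan is to deduce the statement from the finite-weight identity of Theorem~\ref{T: general} by a $p$-adic limiting argument, together with Coleman's theory of families of overconvergent modular forms. Recall that, for a sequence of positive integers $k_i \to \infty$ with $k_i \to \kappa$ $p$-adically, one has $L(Sym^{k_i} Kl, s) \to L(Sym^{\infty, \kappa} Kl, s)$ coefficientwise $p$-adically \cite{MR3249829}. By Theorem~\ref{T: general} each term equals $(1 - s)\, det(1 - s U_p \mid S_{k_i + 2}(\Gamma))$, so the problem reduces to understanding the $p$-adic limit of the classical characteristic series $det(1 - s U_p \mid S_{k_i + 2}(\Gamma))$ as $k_i \to \kappa$.

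First I would invoke the two-variable characteristic power series of $U_p$ on overconvergent forms. By Coleman's construction, $\kappa \mapsto det(1 - s U_p \mid S^\dag_{\kappa + 2}(\Gamma))$ is a $p$-adic analytic function of the weight with values in the ring of entire power series in $s$, specializing at each integer weight to the corresponding overconvergent characteristic series; hence $det(1 - s U_p \mid S^\dag_{k_i+2}(\Gamma)) \to det(1 - s U_p \mid S^\dag_{\kappa+2}(\Gamma))$ as $k_i \to \kappa$. Next I would invoke Coleman's control theorem: for integer weight $k_i + 2$, the overconvergent eigenforms of slope $< k_i + 1$ are classical, so the slope-$<k_i+1$ parts of $det(1 - s U_p \mid S_{k_i+2}(\Gamma))$ and $det(1 - s U_p \mid S^\dag_{k_i+2}(\Gamma))$ coincide. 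Since any fixed coefficient of $s^m$ is governed by finitely many reciprocal roots of bounded slope, and $k_i + 1 \to \infty$, for each $m$ the classical and overconvergent coefficients agree once $i$ is large; combined with the previous convergence this yields $det(1 - s U_p \mid S_{k_i+2}(\Gamma)) \to det(1 - s U_p \mid S^\dag_{\kappa+2}(\Gamma))$, and hence the first equality. The second equality follows from the cohomological formula displayed before the theorem, since $\kappa \neq 0$ forces $H^0_\kappa = 0$ and thus the denominator $det(1 - p s \beta_\kappa \mid H^0_\kappa)$ equals $1$.

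For the Newton polygon, I would transport the bounds of Theorem~\ref{T: general} through the limit using the ultrametric. Write $L(Sym^{k_i} Kl, s) = \sum_m a_m^{(i)} s^m$ and $L(Sym^{\infty,\kappa} Kl, s) = \sum_m A_m s^m$, so $a_m^{(i)} \to A_m$ $p$-adically. In the general case Theorem~\ref{T: general} gives $ord_p\, a_m^{(i)} \geq m(m-1)$ for all $i$, a closed condition preserved in the limit, so $ord_p\, A_m \geq m(m-1)$; the Newton polygon of $L(Sym^{\infty,\kappa} Kl, s)$ therefore lies on or above the polygon with vertices $(m, m(m-1))$, whose successive slopes are $2n$, giving slopes $\geq 2n$. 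For $p = 2$ and $\kappa \in 1 + 2\bb Z_2 = \bb Z_2^*$, I would take the approximating integers $k_i$ to be odd, these being dense in $\bb Z_2^*$, so that the exact equality $ord_2\, a_m^{(i)} = m(m-1)$ of Theorem~\ref{T: general} applies. For each fixed $m$ and all large $i$ we then have $ord_2\, a_m^{(i)} = m(m-1)$ while $ord_2(A_m - a_m^{(i)})$ is as large as we like, whence $ord_2\, A_m = m(m-1)$ by the ultrametric inequality; the limiting Newton polygon is exactly the polygon with vertices $(m, m(m-1))$, with slopes precisely $2n$ for $n \geq 0$.

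The main obstacle is the convergence $det(1 - s U_p \mid S_{k_i+2}(\Gamma)) \to det(1 - s U_p \mid S^\dag_{\kappa+2}(\Gamma))$: it requires that the tame levels $\Gamma_1(4)$ and $\Gamma_1(3)$ fit Coleman's framework (prime-to-$p$ level, with the associated moduli data controlling the cusps) and a careful matching of the slope-$<k_i+1$ classical part against the overconvergent part as the weight moves $p$-adically. The delicate point is that at each individual integer weight the classical series captures only finitely much of the overconvergent slope spectrum, so the identity emerges only in the limit $k_i \to \infty$; making the archimedean limit $k_i \to \infty$ and the $p$-adic limit $k_i \to \kappa$ interact correctly is where the care is needed.
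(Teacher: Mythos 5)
Your derivation of the first equality---passing Theorem \ref{T: general} through the $p$-adic limit $k_i \to \kappa$, matching the classical and overconvergent characteristic series below slope $k_i+1$ via Coleman's control theorem, and using the $p$-adic continuity of $\kappa \mapsto \det(1 - sU_p \mid S^\dagger_{\kappa+2}(\Gamma))$---is sound, and it supplies a step the paper leaves entirely implicit (Theorem \ref{T: overconv interp} is stated ``as a consequence'' with no written proof). The second equality is handled the same way as in the paper. For the lower bound on the slopes, your transport of $ord_p\, a_m \geq m(m-1)$ through the limit works, but note that the paper obtains this bound directly from the cohomological estimate $ord_p(B(m,n)\gamma^{-2m}) \geq 2m$ on the matrix of $\bar\beta_{\infty,\kappa,1}$ in Theorem \ref{T: sym inf}, without any limiting argument. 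One caveat on your ``main obstacle'' paragraph: $\Gamma_1(4)$ for $p=2$ and $\Gamma_1(3)$ for $p=3$ are not tame levels---$p$ divides the level, indeed $p^2$ does when $p=2$---so the classicality statement you need is Coleman's higher-level version, not the usual $\Gamma_1(N)\cap\Gamma_0(p)$ with $p\nmid N$ setup. You flag the right concern but mislabel it.

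The genuine gap is the exact-slope claim for $p=2$ and $\kappa \in 1+2\bb Z_2$. You deduce it from the equality $ord_2\, a_m = m(m-1)$ asserted in Theorem \ref{T: general} for odd $k$, by approximating $\kappa$ with odd integers. But within the paper that equality is itself proved by the chain Theorem \ref{T: overconv interp} $\Rightarrow$ Theorem \ref{T: sym inf} $\Rightarrow$ Theorem \ref{T: general}, the last step using the quotient identity $L(Sym^k Kl, s) = L(Sym^{\infty,k} Kl, s)/L(Sym^{\infty,-(k+2)} Kl, p^{k+1}s)$. So your argument is circular: the finite-weight exactness you invoke is a downstream consequence of the very statement you are proving. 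The missing, non-circular ingredient is external: once the modular interpretation $L(Sym^{\infty,\kappa} Kl, s) = (1-s)\det(1 - sU_2 \mid S^\dagger_{\kappa+2}(\Gamma_1(4)))$ is established, the exact slopes $2n$ are read off from the explicit computation of the $U_2$-slopes on overconvergent $2$-adic modular forms of odd weight, cited in the paper as \cite{MR2106238} and \cite{MR2135280}. No limiting argument from the classical side can substitute for that input, since the paper contains no independently proved finite-weight exactness statement to feed into it.
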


The Kloosterman unit root $L$-function is related to the $p$-adic symmetric power $L$-function through the identity
\[
L_\text{unit}(\kappa, s) = \frac{ L(Sym^{\infty, \kappa} Kl, s) }{ L(Sym^{\infty, \kappa-2} Kl, ps) },
\]
and thus we get the $p$-adic modular description of the unit root $L$-function:

\begin{corollary}
Let $\Gamma$ be as in Theorem \ref{T: overconv interp}. For $\kappa \in \bb Z_p$ and $\kappa \not= 0, 2$, then
\[
L_\text{unit}(\kappa, s) = \frac{(1 - s) det(1 - s U_p \mid S^\dag_{\kappa+2}(\Gamma))}{(1 - ps) det(1 - ps U_p \mid S^\dag_{\kappa}(\Gamma))} = \frac{det(1 - s \beta_\kappa \mid H^1_\kappa)}{det(1 - ps \beta_\kappa \mid H^1_{\kappa-2})}.
\]
Further, when $p = 2$ and $\kappa \in 1 + 2\bb Z_2$, the numerator of $L_\text{unit}(\kappa, s)$ has 2-adic slopes $2n$ for $n\geq 0$ whereas the denominator has slopes $2n+1$ for $n \geq 0$; in particular, there are no cancellations among the zeros and poles.
\end{corollary}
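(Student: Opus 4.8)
The plan is to read both displayed identities off Theorem \ref{T: overconv interp} by substituting it into the ratio formula $L_\text{unit}(\kappa, s) = L(Sym^{\infty, \kappa} Kl, s)/L(Sym^{\infty, \kappa-2} Kl, ps)$ recalled just above the statement, and then to extract the 2-adic slopes from the slope information already contained in that theorem.

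First I would verify that Theorem \ref{T: overconv interp} applies to both factors of the ratio. The numerator $L(Sym^{\infty, \kappa} Kl, s)$ requires $\kappa \neq 0$, while the denominator, being the $\kappa-2$ symmetric power $L$-function evaluated at $ps$, requires $\kappa - 2 \neq 0$; the hypothesis $\kappa \neq 0, 2$ supplies both. Applying the theorem at weight $\kappa$ to the numerator and at weight $\kappa-2$ (so with cusp forms of weight $(\kappa-2)+2 = \kappa$) and argument $ps$ to the denominator yields the overconvergent modular expression at once. The cohomological expression is obtained identically from the second equality of the theorem, with the operator in the denominator understood as the one attached to weight $\kappa - 2$, acting on $H^1_{\kappa-2}$.

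For the slope assertion I would specialize to $p = 2$ and $\kappa \in 1 + 2\bb Z_2$. Since $\kappa - 2 \in 1 + 2\bb Z_2$ as well, the final clause of Theorem \ref{T: overconv interp} applies to both $L(Sym^{\infty, \kappa} Kl, s)$ and $L(Sym^{\infty, \kappa-2} Kl, s)$, each of which then has 2-adic Newton slopes $2n$, $n \geq 0$. The numerator of $L_\text{unit}$ is $L(Sym^{\infty, \kappa} Kl, s)$ itself, so its reciprocal roots have even valuation. For the denominator I would track the rescaling $s \mapsto 2s$: writing $L(Sym^{\infty, \kappa-2} Kl, s) = \prod_i (1 - \gamma_i s)$ with $ord_2(\gamma_i) = 2n_i$, the denominator $L(Sym^{\infty, \kappa-2} Kl, 2s) = \prod_i (1 - 2\gamma_i s)$ has reciprocal roots $2\gamma_i$ of valuation $2n_i + 1$, so all its slopes are odd.

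Finally I would conclude that no zero and pole of $L_\text{unit}$ cancel: a zero arising from the numerator is the reciprocal of some $\gamma$ with $ord_2(\gamma)$ even, whereas a pole arising from the denominator is the reciprocal of some $\gamma'$ with $ord_2(\gamma')$ odd, and two elements of distinct valuation cannot be equal. The entire argument is a formal consequence of Theorem \ref{T: overconv interp}; the only genuinely substantive point is the valuation bookkeeping under $s \mapsto ps$ together with the observation that the resulting even/odd parity separation of the two slope sets is exactly what rules out cancellation — a phenomenon special to $p = 2$ with $\kappa$ odd, where the single-weight slopes are forced to be even rather than merely $\geq 2n$.
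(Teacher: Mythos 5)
Your proposal is correct and is exactly the deduction the paper intends: the corollary is stated without an explicit proof, being an immediate consequence of substituting Theorem \ref{T: overconv interp} (applied at weights $\kappa$ and $\kappa-2$, whence the hypothesis $\kappa \neq 0,2$) into the ratio identity, with the slope and non-cancellation claims following from the same parity bookkeeping under $s \mapsto ps$ that you carry out.
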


Lastly, while we have focused on symmetric powers, other linear-algebraic operations, such as tensor or exterior products, have been studied for other families. For general multi-parameter families see \cite{MR3239170, MR3572279}; for moment $L$-functions of Calabi-Yau hypersurfaces see \cite{MR2385246}; for the hyper-Kloosterman family see \cite{FuWan-$L$-functionssymmetricproducts-2005,MR2425148,FuWan-TrivialfactorsL-functions-, MR3572279}; for the Airy family \cite{MR2542216, MR2873140}; and for the Legendre family of elliptic curves see \cite{Adolphson-$p$-adictheoryof-1976, Haessig-EllCurve}. 

{\it Acknowledgements.} I want like to thank John Bergdall, Lloyd Kilford, Robert Pollack, and Daqing Wan for helpful discussions. 
\section{Frequency of Kloosterman sums}

In this section, we look at the frequency of Kloosterman values, that is, given a fixed $s$, how many $t \in \bb F_q^*$ produce a Kloosterman sum $-Kl_q(t)$ equal to $s$. We begin by fixing some notation. Let $p$ be a prime, and set $q = p^m$. Let $\bb Q_p$ be the field of $p$-adic numbers with $\bb Z_p$ its ring of integers. Fix $\pi \in \overline{\bb Q}_p$ satisfying $\pi^{p-1} = -p$ and note $\zeta_p := 1 - \pi$ is a primitive $p$-th root of unity. Let $\bb Z_p[\pi]$ denote the ring of integers of $\bb Q_p(\pi)$. Changing notation slightly from the introduction, for $t \in \bb F_q^*$ define the Kloosterman sum
\[
Kl_q(t) := \sum_{x \in \bb F_q^*} \zeta_p^{Tr_{\bb F_q / \bb F_p}(x + \frac{t}{x})} \in \bb Z_p[\pi] \cap \bb R.
\]

\begin{proposition}\label{P: Kl congruence}
For any prime number $p \geq 3$ and $m \geq 1$, or $p = 2$ and $m \geq 2$, the Kloosterman sums satisfy $-Kl_q(t) \equiv 1$ mod $\pi^2$ and $Kl_q(t)^2 < 4q$.
\end{proposition}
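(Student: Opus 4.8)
The plan is to treat the two assertions separately, deriving the congruence first and then using it to upgrade Weil's bound to a strict inequality. For the congruence, I would expand each summand using $\zeta_p = 1 - \pi$. Writing $n_x \in \{0, 1, \dots, p-1\}$ for the integer lift of $Tr_{\bb F_q/\bb F_p}(x + t/x)$, the binomial expansion gives $\zeta_p^{n_x} = (1-\pi)^{n_x} \equiv 1 - n_x \pi \pmod{\pi^2}$, so that
\[
Kl_q(t) \equiv (q-1) - \pi \sum_{x \in \bb F_q^*} n_x \pmod{\pi^2}.
\]
Since $v_\pi(p) = p-1$, the hypotheses ($p \ge 3$, $m \ge 1$, or $p = 2$, $m \ge 2$) are exactly what is needed to guarantee $v_\pi(q) = m(p-1) \ge 2$, hence $q - 1 \equiv -1 \pmod{\pi^2}$. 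It then remains to show the linear term vanishes modulo $\pi^2$.

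The key step is the vanishing of $\sum_x n_x$ modulo $p$. By $\bb F_p$-linearity of the trace, $\sum_{x \in \bb F_q^*} n_x$ reduces modulo $p$ to $Tr_{\bb F_q/\bb F_p}\big(\sum_{x \in \bb F_q^*}(x + t/x)\big)$. Both $\sum_{x} x$ and $\sum_x t/x = t\sum_x x^{-1}$ are sums over all of $\bb F_q^*$, which vanish precisely when $q > 2$ --- again exactly the stated hypotheses --- so the trace is $Tr(0) = 0$. Thus $\sum_x n_x \equiv 0 \pmod p$, and since $v_\pi(\pi p) = 1 + (p-1) = p \ge 2$ the linear term lies in $\pi^2 \bb Z_p[\pi]$. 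Combining, $Kl_q(t) \equiv -1 \pmod{\pi^2}$, that is, $-Kl_q(t) \equiv 1 \pmod{\pi^2}$.

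For the bound, the non-strict inequality $Kl_q(t)^2 \le 4q$ is Weil's estimate: writing $Kl_q(t) = -(\pi_0(t) + \pi_1(t))$ with $\pi_1(t) = \overline{\pi_0(t)}$ and $|\pi_0(t)| = \sqrt q$ (the reciprocal-root facts recorded in the introduction), one has $Kl_q(t) = -2\sqrt q\cos\theta$. To force strictness I would use the congruence to exclude equality. If $Kl_q(t)^2 = 4q$, comparing $\pi$-adic valuations gives $2\,v_\pi(Kl_q(t)) = 2 v_\pi(2) + m(p-1)$, which under the hypotheses is $\ge 2$ (for $p \ge 3$) or $\ge 4$ (for $p = 2$); either way $v_\pi(Kl_q(t)) \ge 1$, so $Kl_q(t) \equiv 0 \pmod{\pi}$. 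This contradicts $Kl_q(t) \equiv -1 \pmod{\pi}$, a unit. Hence $Kl_q(t)^2 \ne 4q$, giving the strict inequality. I expect the main obstacle to be purely bookkeeping: verifying that the single condition $q > 2$ simultaneously controls three things --- that $q \equiv 0 \pmod{\pi^2}$, that $\sum_{x}(x + t/x)$ vanishes in $\bb F_q$, and that the excluded equality case has positive $\pi$-valuation --- so that the congruence and the strict bound share exactly the same range of validity, which is what pins down the stated hypotheses on $p$ and $m$.
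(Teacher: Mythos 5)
Your proposal is correct and follows essentially the same route as the paper: expand $\zeta_p^{n}=(1-\pi)^{n}$ modulo $\pi^2$, use $\sum_{x\in\bb F_q^*}x=\sum_{x\in\bb F_q^*}x^{-1}=0$ for $q>2$ together with $\pi^2\mid q$ to get $-Kl_q(t)\equiv 1 \bmod \pi^2$, then rule out equality in Weil's bound because $4q\equiv 0$ while $Kl_q(t)^2$ is a $\pi$-adic unit. Your version merely makes the valuation bookkeeping more explicit than the paper does.
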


\begin{proof}
Write
\begin{align*}
Kl_q(t) &= \sum_{x \in \bb F_q^*} (1-\pi)^{Tr_{\bb F_q / \bb F_p}(x + \frac{t}{x})} \\
&= \sum_{x \in \bb F_q^*} \left( 1 - Tr_{\bb F_q / \bb F_p}(x + t/x) \pi \right) \qquad \text{mod } \pi^2 \\
&= q - 1 - \pi \sum_{x \in \bb F_q^*} Tr_{\bb F_q / \bb F_p}(x + t/x) \qquad \text{mod } \pi^2 \\
&= -1 \qquad \text{mod } \pi^2,
\end{align*}
using the fact that $\sum_{x \in \bb F_q^*} x^n = 0$ if $(q-1) \nmid n$. Thus, $-Kl_q(t) \equiv 1$ mod $\pi^2$. Next, from Weil we know $| Kl_q(t) | \leq 2 \sqrt{q}$ but this cannot be an equality else $\pi^2 \mid Kl_q(t)^2$ which we have just shown is not the case.
\end{proof}

Note that when $p = 2$ or $3$ then $Kl_q(t) \in \bb Z$, and $\pi^2 = 4, -3$, respectively. Hence, Proposition \ref{P: Kl congruence} for $p = 2, 3$ says $-Kl_q(t) \equiv 1$ mod $4$ or $3$, respectively. Now, for $f \in \bb R$ define
\[
F(q, f) := \# \{ t \in \bb F_q^* \mid -Kl_q(t) = f \}.
\]
By Proposition \ref{P: Kl congruence}, in order for $F(q, f)$ to have a chance of being non-zero, we must at least have  $f \equiv 1$ mod $\pi^2$ and $f^2 < 4q$. As the following result shows, for $p = 2$ or 3, $F(q, f)$ may be recognized as the Kronecker class number. When $p \geq 5$, we do not know an alternate description of $F(q, f)$, however, there is certainly a pattern to its distribution as $f$ varies.

\begin{theorem}\label{T: HK}
Suppose $p = 3$ and $m \geq 1$, or $p = 2$ and $m \geq 2$. Let $f \in \bb Z$ satisfy $f \equiv 1$ mod $\pi^2$ and $f^2 < 4q$. Then $F(q, f) = H(f^2 - 4q)$ where $H$ is the Kronecker class number.
\end{theorem}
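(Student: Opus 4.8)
The plan is to realize $f = -Kl_q(t)$ as the trace of Frobenius of an explicit \emph{ordinary} elliptic curve over $\mathbb{F}_q$, and then to invoke Deuring's correspondence, which counts elliptic curves of given trace by exactly the class number appearing in the statement. I begin with $p = 2$. Writing $Kl_q(t) = 2N_0 - (q-1)$ where $N_0 = \#\{x \in \mathbb{F}_q^* : Tr(x + t/x) = 0\}$, Artin--Schreier theory says $Tr(c) = 0$ precisely when $y^2 + y = c$ has two solutions, so $2N_0$ is the number of affine points with $x \neq 0$ on the curve $C_t \colon y^2 + y = x + t/x$. Viewing $C_t$ as a degree-$2$ cover of the $x$-line, it is ramified only above $x = 0$ and $x = \infty$ (each a pole of odd order $1$), and Riemann--Hurwitz gives $2g - 2 = 2(-2) + 4 = 0$, so $C_t$ is an elliptic curve $E_t/\mathbb{F}_q$. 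Accounting for the boundary points over $x = 0$ and at infinity yields $\#E_t(\mathbb{F}_q) = q + 1 + Kl_q(t)$, so the trace of Frobenius of $E_t$ is exactly $f = -Kl_q(t)$. The congruence $f \equiv 1 \bmod \pi^2$ from Proposition~\ref{P: Kl congruence} forces $f$ odd, hence $\gcd(f,p) = 1$ so that $E_t$ is ordinary, while $f^2 < 4q$ places $f$ in the Hasse range.

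The next step is to analyze the parametrization $t \mapsto E_t$. I would show that as $t$ runs over the fibre $\{t : -Kl_q(t) = f\}$ it meets every $\mathbb{F}_q$-isomorphism class of ordinary elliptic curve of trace $f$, and compute how many $t$ land in each class, keeping track of quadratic twists and of the automorphism groups. Once these fibre counts are matched against the automorphism weights, Deuring's theorem (equivalently the Eichler mass formula) identifies the resulting weighted total with $H(f^2 - 4q)$, the class number of the imaginary quadratic order of discriminant $f^2 - 4q$. This gives $F(q,f) = H(f^2 - 4q)$ for $p = 2$, which is in essence the theorem of Lachaud--Wolfmann.

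For $p = 3$ the same strategy applies, but the intermediate object is no longer an elliptic curve. Here $Kl_q(t) = N_0 - N_1$ with $N_j = \#\{x : Tr(x+t/x) = j\}$, and reality of $Kl_q(t)$ forces $N_1 = N_2$; eliminating the $N_j$ gives $\#C_t^{\mathrm{aff}} = 2 Kl_q(t) + (q-1)$ for the curve $C_t \colon y^3 - y = x + t/x$. This curve has genus $2$ (its degree-$3$ cover of the $x$-line is wildly ramified over $x = 0, \infty$, with different $8$, so $2g-2 = 3(-2)+8 = 2$), and the factor $2$ shows that the Kloosterman reciprocal roots $\pi_0(t), \pi_1(t)$ sit inside $\mathrm{Jac}(C_t)$ with multiplicity two. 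The plan is to split off the elliptic factor carrying these eigenvalues, identify it with an ordinary elliptic curve $E_t$ of trace $f$ (again $f \equiv 1 \bmod \pi^2$ gives $\gcd(f,p)=1$), and then count fibres exactly as in the $p=2$ case to reach $H(f^2 - 4q)$.

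The hard part will be the exact bookkeeping in the counting step: matching the number of parameters $t$ per isomorphism class against the automorphism weights so that the \emph{unweighted} integer $F(q,f)$ comes out equal to the Kronecker class number with no spurious factor, together with controlling the exceptional behaviour at $j = 0, 1728$ and at any degenerate fibres of $t \mapsto E_t$. For $p = 3$ there is the additional obstacle of justifying that the complementary (non-Kloosterman) factor of the genus-two Jacobian contributes nothing to the count, and of pinning down the split precisely. In both characteristics the congruence $f \equiv 1 \bmod \pi^2$ together with $f^2 < 4q$ is exactly what confines the problem to the ordinary Hasse range in which Deuring's clean statement, and hence the class-number identity, is available.
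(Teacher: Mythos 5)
Your strategy is the right one, and it is essentially the route the paper itself takes: the paper's proof simply quotes from the literature the identity $-Kl_q(t) = q+1-\#E_t(\bb F_q)$ for an explicit elliptic curve ($Y^2+XY=X^3+tX$ for $p=2$, which is your $y^2+y=x+t/x$ after dividing by $X^2$ and setting $y=Y/X$; and $Y^2=X^3+X^2-t$ for $p=3$, from Moisio), and then quotes the count $\#\{t : a_{E_t}=f\}=H(f^2-4q)$ from the proof of Lachaud--Wolfmann's Theorem 3.3. Your $p=2$ computation of the trace of Frobenius via Artin--Schreier theory and Riemann--Hurwitz is correct, as is the observation that $f\equiv 1 \bmod \pi^2$ forces ordinarity.

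The gap is that the entire content of the theorem is the step you defer. You write that you ``would show'' that $t\mapsto E_t$ meets every isomorphism class of trace $f$ and ``compute how many $t$ land in each class,'' and you flag the bookkeeping as ``the hard part'' --- but that bookkeeping \emph{is} the theorem. What has to be proved is (i) the Deuring--Waterhouse--Schoof count: the number of $\bb F_q$-isomorphism classes of elliptic curves with trace $a$, $\gcd(a,p)=1$, $a^2<4q$, is exactly $H(a^2-4q)$; and (ii) the family $t\mapsto E_t$, $t\in\bb F_q^*$, hits each class with trace $\equiv 1\bmod \pi^2$ exactly once and hits nothing else (for $p=2$ the point is that an ordinary trace is odd, so exactly one member of each quadratic-twist pair $\{a,-a\}$ satisfies $a\equiv 1\bmod 4$, and one must check the family selects precisely that twist for every $j$-invariant). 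Neither is carried out. For $p=3$ the proposal is further from complete: you must actually produce an elliptic quotient of the genus-2 curve $y^3-y=x+t/x$ over $\bb F_q$. The zeta-function computation shows its Jacobian is isogenous to $E_t\times E_t$ with $a_{E_t}=f$, so there is in fact no ``non-Kloosterman factor'' to worry about; but an isogeny class is not enough for the count --- you need the isomorphism class of the quotient as a function of $t$. The paper sidesteps this entirely by citing Moisio's explicit curve $Y^2=X^3+X^2-t$, whose point count is a direct quadratic-character computation.
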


\begin{proof}
This result appears in a few places, such as \cite{MR1054286, MR1111555}. The one in \cite{MR1111555} goes like this. Suppose $p = 2$. In the proof of \cite[Proposition 3.1 part (4)]{MR1111555}  $-Kl_{2^m}(t) = 2^m + 1 - \# E_t(\bb F_{2^m})$ where $E_t$ is the elliptic curve with affine equation $Y^2 + XY = X^3 + tX$. Thus, for $f \in \bb Z$, $f \equiv 1$ mod 4, and $f^2 < 4 \cdot 2^m$,
\[
F(2^m, f) = \#\{ t \in \bb F_{2^m}^* \mid \# E_t(\bb F_{2^m}) = 2^m + 1 - f \} = H(f^2 - 4 \cdot 2^m),
\]
where the last equality comes from the proof of \cite[Theorem 3.3]{MR1111555}.

The proof for $p = 3$ is essentially the same except in this case $-Kl_{3^m}(t) = 3^m + 1 - \#E_t(\bb F_{3^m})$ by \cite{MarkoMoisio2008}, where $E_t$ is the elliptic curve with affine equation $Y^2 = X^3 + X^2 - t$.
\end{proof}

\section{Modular interpretation}

Before giving the proofs of Theorem \ref{T: general} and Corollary \ref{refined}, we need a precise description of the Eichler-Selberg trace formula for the Hecke operator $T_q$ on the space of cusp forms $S_k(\Gamma_1(N))$ for $N = 3$ and $4$ for $p = 3$ and $2$, respectively.

\begin{theorem}\cite[Theorem 2.5]{MR1111555}\label{T: trace for p2}
Let $p = 2$ and $k \geq 3$, and let $q$ denote a power of 2. The trace of the Hecke operator $T_q$ on the space of cusp forms $S_k(\Gamma_1(4))$ is given by
\[
Tr(T_q) = -1 - (-1)^{qk/2} \sum_f \left( \frac{\rho^{k-1} - \bar \rho^{k-1}}{\rho - \bar \rho} \right) H(f^2 - 4q)
\]
where the sum runs over $\{ f \in \bb Z \mid f^2 < 4q \text{ and } f \equiv 1 \text{ mod }4 \}$, and $\rho, \bar \rho$ are the roots of $X^2 - fX + q$.
\end{theorem}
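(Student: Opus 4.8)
The plan is to derive this as a specialization of the Eichler--Selberg trace formula, following the approach of Schoof and van der Vlugt. Since $p = 2$ divides the level $N = 4$, the Hecke operator $T_q$ with $q = 2^m$ coincides with $U_2^m$, so I am really computing $\text{Tr}(U_2^m \mid S_k(\Gamma_1(4)))$. First I would decompose $S_k(\Gamma_1(4)) = \bigoplus_{\chi \bmod 4} S_k(\Gamma_0(4), \chi)$ into eigenspaces for the diamond operators. Only the character $\chi$ with $\chi(-1) = (-1)^k$ can contribute (the trivial character for even $k$, the nontrivial quadratic character for odd $k$), so the trace reduces to that of $T_q$ on a single space $S_k(\Gamma_0(4), \chi)$, to which the classical trace formula applies.

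Next I would write out the four standard contributions to $\text{Tr}(T_q \mid S_k(\Gamma_0(4),\chi))$: the identity term (present only when $q$ is a perfect square), the elliptic term, the hyperbolic term, and the dual/Eisenstein correction. The elliptic term is a sum over integers $f$ with $f^2 < 4q$ of the Gegenbauer polynomial $\frac{\rho^{k-1} - \bar\rho^{k-1}}{\rho - \bar\rho}$, with $\rho, \bar\rho$ the roots of $X^2 - fX + q$, weighted by local class numbers attached to the orders of discriminant $(f^2 - 4q)/\ell^2$. The hyperbolic term is a sum over factorizations $q = d \cdot d'$ weighted by $\min(d,d')^{k-1}$ and a character sum in $\chi$.

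The core of the argument is then to simplify these terms using the special features $N = 4$ and $q = 2^m$. For the hyperbolic term, every divisor $d \mid q$ with $d > 1$ is a power of $2$, hence not coprime to $4$, so $\chi(d) = 0$ and those terms vanish; only the extreme divisors survive, contributing a constant. For the elliptic term, summing the local class-number weights against the single relevant character $\chi$ should collapse them into the single Kronecker class number $H(f^2 - 4q)$ and restrict the range to $f \equiv 1 \bmod 4$, which is exactly the congruence forced on $-Kl_q(t)$ in Proposition \ref{P: Kl congruence}; the accompanying Gauss-sum/character factors assemble into the global sign $(-1)^{qk/2}$. Finally I would verify that the identity term (when $q$ is a square) together with the leftover hyperbolic constant and the Eisenstein correction combine to exactly $-1$.

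The main obstacle is that $p = 2$ divides $N$, so this is the non-coprime case of the trace formula, where the clean statement for $\gcd(q,N) = 1$ does not apply and one must instead use the version valid for powers of the $U_2$-operator. Tracking the local class-number weights and the ramification factors at $2$ precisely enough to recover the \emph{unweighted} Kronecker class number $H$, and pinning the accumulated constant to exactly $-1$ rather than some quantity depending on $k$ and $m$, is the delicate bookkeeping; matching the character sum to the single congruence $f \equiv 1 \bmod 4$ together with the sign $(-1)^{qk/2}$ is the other place where care is required.
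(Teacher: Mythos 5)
The paper offers no proof of this statement at all: it is quoted verbatim, citation included, from \cite[Theorem 2.5]{MR1111555} (Schoof--van der Vlugt), so there is no internal argument to compare yours against. What you have written is essentially a reconstruction of how that reference derives the formula, namely by specializing the general Eichler--Selberg/Hijikata trace formula for $T_{2^m}=U_2^m$ to level $4$. Your structural choices are the right ones: the reduction to the single space $S_k(\Gamma_0(4),\chi)$ with $\chi(-1)=(-1)^k$ is correct (the other eigenspace of the diamond operators is zero), and the fact that the non-extreme divisor pairs in the hyperbolic term die because the character mod $4$ kills even arguments is exactly the mechanism that isolates the constant $-1$. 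One small simplification you could make explicit: the identity term (present when $q$ is a square) carries a factor $\chi(\sqrt{q})$ with $\sqrt{q}$ even, so it vanishes outright rather than needing to be combined with the hyperbolic and Eisenstein contributions.

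That said, as a proof your proposal stops exactly where the content of the theorem begins. The passage from the weighted local class numbers $h_w$ of the orders of discriminant $(f^2-4q)/\ell^2$, summed against the character weights $\mu(t,f,q)$, to the single unweighted Kronecker class number $H(f^2-4q)$, together with the emergence of the congruence $f\equiv 1 \bmod 4$ and the sign $(-1)^{qk/2}$, is precisely the computation that Theorem 2.5 of \cite{MR1111555} records; you name it as ``delicate bookkeeping'' but do not perform it. In particular nothing in your sketch yet rules out, say, an extra weight depending on $\mathrm{ord}_2(f^2-4q)$ surviving in front of $H$, or a constant depending on $m$. So I would classify this as a correct and well-targeted plan that matches the cited source's approach, but not a self-contained proof; for the purposes of this paper the honest move is the one the author makes, which is to cite \cite{MR1111555} and (as is done for $p=3$ in Theorem \ref{T: trace for p3}) reduce any new case to Proposition 2.1 and Theorem 2.2 of that paper rather than re-deriving the trace formula from scratch.
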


A similar theorem is true for the Hecke operator $T_q$ on $S_k(\Gamma_1(3))$ when $p = 3$.

\begin{theorem}\label{T: trace for p3}
Let $p = 3$ and $k \geq 1$, and let $q$ denote a power of 3. The trace of the Hecke operator $T_q$ on the space of cusp forms $S_k(\Gamma_1(3))$ is given by
\[
Tr(T_q) = -1 - \sum_f \left( \frac{\rho^{k-1} - \bar \rho^{k-1}}{\rho - \bar \rho} \right) H(f^2 - 4q)
\]
where the sum runs over $\{ f \in \bb Z \mid f^2 < 4q \text{ and } f \equiv 1 \text{ mod }3 \}$, and $\rho, \bar \rho$ are the roots of $X^2 - fX + q$.
\end{theorem}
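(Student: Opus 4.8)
The plan is to derive the formula from the classical Eichler--Selberg trace formula, following the template that \cite{MR1111555} uses to establish the analogous $p = 2$ statement of Theorem \ref{T: trace for p2}. First I would write $\text{Tr}(T_q)$ on $S_k(\Gamma_1(3))$ as a sum of its standard contributions: the identity term (present only when $q$ is a perfect square, i.e. $m$ even), the elliptic term summing over integers $f$ with $f^2 < 4q$, the hyperbolic term summing over divisors of $q$, and the constant parabolic correction. Using the decomposition $S_k(\Gamma_1(3)) = \bigoplus_\chi S_k(\Gamma_0(3), \chi)$ over the two Dirichlet characters modulo $3$, and the parity constraint $\chi(-1) = (-1)^k$ that leaves only one character nonzero for each $k$, I would assemble the contributions for the relevant $\chi$ and then sum. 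The decisive simplification is that $q = 3^m$ is divisible by the level $N = 3$, so $T_q$ coincides with Atkin's $U_q$ operator and the local factors at the prime $3$ degenerate, which is exactly what removes any residual $\chi(f)$ weight from the elliptic term.

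The elliptic term produces the stated sum. Its weight is the Gegenbauer-type factor $\tfrac{\rho^{k-1} - \bar\rho^{k-1}}{\rho - \bar\rho}$, with $\rho, \bar\rho$ the roots of $X^2 - fX + q$ viewed as Frobenius eigenvalues, so no adjustment is needed there, and the class-number weight is precisely the Kronecker class number $H(f^2 - 4q)$. The congruence $f \equiv 1 \bmod 3$ arises from the $\Gamma_1(3)$-level structure: such a structure provides a rational point of order $3$ on the associated elliptic curve over $\mathbb{F}_{3^m}$, whence $3 \mid \#E(\mathbb{F}_{3^m}) = q + 1 - f$ and therefore $f \equiv q + 1 \equiv 1 \bmod 3$. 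This is the same congruence produced on the exponential-sum side by Proposition \ref{P: Kl congruence} and exploited in Theorem \ref{T: HK}, which is the structural reason the modular and Kloosterman sides will ultimately match.

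It remains to show that the identity, hyperbolic, and parabolic contributions collapse to the single constant $-1$. I would treat the parities of $m$ together and verify that the identity term, when present, cancels against part of the hyperbolic and parabolic terms, with the cusp count for $\Gamma_1(3)$ and the value $\gcd(q, 3) = 3$ driving the computation. The main obstacle I anticipate is precisely this constant-term bookkeeping: unlike the $p = 2$ case of Theorem \ref{T: trace for p2}, there is no sign factor $(-1)^{qk/2}$, and confirming its absence requires carefully tracking the character values at the Frobenius roots and the contribution of the divisor $d = \sqrt{q}$ through the character sum modulo $3$, where the relevant parities differ from the $p = 2$ situation because $q = 3^m$ is always odd. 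Once the constant is pinned to $-1$ and the elliptic term is identified with the displayed sum, the formula follows.
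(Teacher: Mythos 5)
Your proposal is correct and follows essentially the same route as the paper: the paper's proof likewise identifies $S_k(\Gamma_1(3))$ with $S_k(\Gamma_0(3),\chi)$ for the single character $\chi$ allowed by the parity of $k$, and then invokes the Eichler--Selberg trace formula in the form of Proposition 2.1 and Theorem 2.2 of \cite{MR1111555}. Your plan merely spells out the elliptic/hyperbolic/parabolic bookkeeping (and the $f\mapsto -f$ symmetry that reduces the elliptic sum to $f\equiv 1 \bmod 3$) that those cited results package up.
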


\begin{proof}
When $k$ is even, $S_k(\Gamma_1(3)) = S_k(\Gamma_0(3), 1)$ where $1$ is the trivial character, else if $k$ is odd then $S_k(\Gamma_1(3)) = S_k(\Gamma_0(3), \chi)$ where $\chi$ is the non-trivial character of $(\bb Z/3 \bb Z)^*$. The proof of the theorem follows by Proposition 2.1 and Theorem 2.2 of \cite{MR1111555}.
\end{proof}

Way now prove the main result.

\begin{proof}[Proof of Theorem \ref{T: general}]
Write
\begin{align*}
L(Sym^k Kl, s) :&= \prod_{t \in \overline{\bb F}_p^*} \prod_{i = 0}^k (1 - \pi_0(t)^i \pi_1(t)^{k-i} s^{deg(t)})^{-1/deg(t)} \\
&= \exp\left( \sum_{r \geq 1} \sum_{t \in \overline{\bb F}_p^*} \sum_{i = 0}^k (\pi_0(t)^r)^i (\pi_1(t)^r)^{k-i} \frac{s^{r deg(t)}}{r deg(t)} \right) \\
&= \exp\left( \sum_{m \geq 1} S_m \frac{s^m}{m} \right)
\end{align*}
where
\begin{align*}
S_m :&= \sum_{r \geq 1, t \in \overline{\bb F}_p^*, r deg(t) = m} \sum_{i = 0}^k (\pi_0(t)^r)^i (\pi_1(t)^r)^{k-i} \\
&= \sum_{r \geq 1, t \in \overline{\bb F}_p^*, r deg(t) = m} \frac{( \pi_0(t)^r)^{k+1} - ( \pi_1(t)^r)^{k+1}}{\pi_0(t)^r - \pi_1(t)^r}.
\end{align*}
Note that since $\pi_0(t)$ and $\pi_1(t)$ are roots of $x^2 + Kl_{p^{deg(t)}}(t)x + p^{deg(t)}$, for $r \geq 1$ and $t \in \overline{\bb F}_p^*$ such that $r deg(t) = m$, then $\pi_0(t)^r$ and $\pi_1(t)^r$ are roots of $x^2 + Kl_{p^m}(t)x + p^m$. Grouping together the $t$ such that $-Kl_{p^m}(t) = f$, by Theorem \ref{T: HK}, if $p = 3$ and $m \geq 1$, or $p = 2$ and $m \geq 2$, then we have
\begin{equation}\label{E: Sm}
\qquad S_m = \sum_{f \in \bb Z, f^2 < 4 \cdot p^m, f \equiv 1 \text{ mod } \pi^2} \left( \frac{\rho^{k+1} - \bar \rho^{k+1}}{\rho - \bar \rho} \right) H(f^2 - 4p^m)
\end{equation}
where $\rho$ and $\bar \rho$ are roots of $x^2 - f x + p^m$. As mentioned above, $\pi^2$ = 4 or $-3$ if $p = 2$ or 3, respectively. If $p = 2$ and $m =1$, then
\[
S_1 = \sum_{t \in \bb F_2^*} \frac{\pi_0(t)^{k+1} -  \pi_1(t)^{k+1}}{\pi_0(t) - \pi_1(t)}.
\]
where $\pi_0(t)$ and $\pi_1(t)$ are roots of $x^2 + x + 2$ since $Kl_2(1) = 1$.

Since $U_{p^m} = U_p^m$, for $p = 3$ and $m \geq 1$, or $p = 2$ and $m \geq 2$, from (\ref{E: Sm}) and Theorems \ref{T: trace for p2} and \ref{T: trace for p3}, we see that the trace of $U_q$ on $S_{k+2}(\Gamma)$ satisfies $Tr(U_p^m) = -1 - S_m$. For $p = 2$ and $m = 1$, since $H(-7) = 1$, we have on $S_{k+2}(\Gamma_1(4))$
\begin{align*}
Tr(U_2) &= -1 - (-1)^{k+2} \left( \frac{\rho^{k+1} - \bar \rho^{k+1}}{\rho - \bar \rho} \right) \\
&= -1 - \left( \frac{(-\rho)^{k+1} - (-\bar \rho)^{k+1}}{(-\rho) - (-\bar \rho)} \right) \\
&= -1 - S_1.
\end{align*}
Thus, for $p = $ 2 or 3,
\begin{align*}
L(Sym^k Kl, s) &= \exp\left( \sum_{m \geq 1} S_m \frac{s^m}{m} \right) \\
&= \exp\left( \sum_{m \geq 1} (-1 - Tr(U_p^m)) \frac{s^m}{m} \right) \\
&= (1-s)det(1 - s U_p \mid S_{k+2}(\Gamma)).
\end{align*}

The result on the Newton polygon follows from Theorem \ref{T: sym inf} below and the identity:
\[
L(Sym^k Kl, s) = \frac{L(Sym^{\infty, k} Kl, s)}{L(Sym^{\infty, -(k+2)} Kl, p^{k+1} s)}.
\]
\end{proof}

\begin{proof}[Proof of Corollary \ref{refined}]
We first suppose $p = 2$ and $k$ is even. By \cite[Lemma 1]{MR369263} $U_2: S_k(\Gamma_1(4)) \rightarrow S_k(\Gamma_1(2))$, and so $det(1 - s U_2 \mid S_{k}(\Gamma_1(4))) = det(1 - s U_2 \mid S_{k}(\Gamma_1(2)))$. By Atkin-Lehner-Li theory, we may decompose $S_k(\Gamma_1(2)) = W \oplus S_k^\text{new}(\Gamma_1(2))$ where
\[
W := S_k(SL_2(\bb Z)) \oplus V_2 S_k(SL_2(\bb Z)),
\]
where $V_p$ is the map on $q$-expansions $f(q) \mapsto f(q^p)$. Let $f \in S_k(SL_2(\bb Z))$ be an eigenform of $T_2$ with $T_2(f) = \lambda_f f$. Since $T_2 = U_2 + 2^{k-1} V_2$,  the matrix of $U_2$ on the vector space generated by $\{f, V_2 f \}$ is of the form
\[
\left(
\begin{array}{cc}
\lambda_f  & 1   \\
 -2^{k-1} & 0 \\
\end{array}
\right).
\]
Thus $det(1 - s U_2 \mid W) = det(1 - T_2 s + 2^{k-1} s^2 \mid S_k(SL_2(\bb Z)))$, and so by \cite{MR340258} every reciprocal root  has complex absolute value equal to $p^{(k-1)/2}$. By \cite[Theorem 3]{MR369263}, each reciprocal root of $det(1 - U_2 s \mid S_k^\text{new}(\Gamma_1(2)))$ equals $\pm 2^{(k-2)/2}$, and the number of such roots equals dim $S_k^\text{new}(\Gamma_0(2))) = k-1-\lfloor k/4 \rfloor - 2 \lfloor k/3 \rfloor - \delta(k/2)$ where $\delta(n) = 1$ if $n = 1$ else equals zero.

Suppose $p = 2$ and $k$ is odd. In this case, $S_k(\Gamma_1(4)) = S_k^\text{new}(\Gamma_0(4), \chi)$ where $\chi$ is the nontrivial Dirichlet character of $(\bb Z / 4 \bb Z)^*$. By \cite[Theorem 3]{MR369263}  each reciprocal root of $det(1 - U_2 s \mid S_k^\text{new}(\Gamma_0(4, \chi)))$ has absolute value equal to $2^{(k-1)/2}$.

The case $p = 3$ is similar. In this case, for $k$ even we may decompose $S_k(\Gamma_1(3)) = W \oplus S_k^\text{new}(\Gamma_1(3))$ where
\[
W := S_k(SL_2(\bb Z)) \oplus V_3 S_k(SL_2(\bb Z)).
\]
Using the same argument as above, every reciprocal root of $det(1 - U_3 s \mid W) = det(1 - T_3 s + 3^{k-1} s^2 \mid SL_2(\bb Z))$ has absolute value equal to $3^{(k-1)/2}$, and by \cite[Theorem 3]{MR369263} each reciprocal root of $det(1 - U_3 s \mid S_k^\text{new}(\Gamma_1(3)))$ equals $\pm 3^{(k-2)/2}$.

When $p = 3$ and $k$ is odd, then $S_k(\Gamma_1(3)) = S_k^\text{new}(\Gamma_0(3), \chi)$ where $\chi$ is the nontrivial Dirichlet character of $(\bb Z / 3 \bb Z)^*$. By \cite[Theorem 3]{MR369263}  each reciprocal root of $det(1 - U_3 s \mid S_k^\text{new}(\Gamma_0(3), \chi))$ has absolute value equal to $3^{(k-1)/2}$.
\end{proof}

\section{$p$-adic symmetric power $L$-functions}\label{S: padicSym}

In \cite{MR2385246}, a cohomology theory was introduced for the $p$-adic $\kappa$-symmetric power $L$-function $L(Sym^{\infty, \kappa} Kl, s)$ for $p \geq 5$. In this section we show the same theory holds for $p \geq 2$ without any changes. To align with \cite{MR2385246} we will work over the base field $\bb F_q$, where $q$ is a power of $p$. Define the Kloosterman exponential sum
\[
Kl_m(q, t) := \sum_{x \in \bb F_{q^{m \text{deg}(t)}}^*} \zeta_p^{Tr(x + \frac{t}{x})} \in \bb R
\]
where $\deg(t) := [\bb F_q(t) : \bb F_q]$. As before, the associated $L$-function is a quadratic polynomial 
\[
L(\bb F_q, t, s) := \exp\left( \sum_{m=1}^\infty Kl_m(q, t) \frac{s^m}{m} \right) = (1 - \pi_0(t) s)(1 - \pi_1(t) s)
\]
whose reciprocal roots satisfy $| \pi_0(t) | = | \pi_1(t) | = \sqrt{q^{\deg(t)}}$ and $\pi_0(t) \pi_1(t) = q^{\deg(t)}$. For $\kappa \in \bb Z_p$ define the $p$-adic $\kappa$-symmetric power $L$-function 
\[
L(Sym^{\infty, \kappa} Kl / \bb F_q, s) := \prod_{t \in | \bb G_m / \bb F_q |} \prod_{i = 0}^\infty \frac{1}{1 - \pi_0(t)^{\kappa-i} \pi_1(t)^i s^{\deg(t)}}.
\]
where the first product runs over all closed points of the algebraic torus defined over $\bb F_q$. 

\begin{theorem}\label{T: sym inf}
Let $p \geq 2$ be a prime number, and $\kappa \in \bb Z_p$. Then there exists $p$-adic spaces $H_\kappa^0$ and $H_\kappa^1$, and a completely continuous map $\bar \beta_{\infty, \kappa}$, such that
\[
L(Sym^{\infty, \kappa} Kl / \bb F_q, s) = \frac{det(1 - s \bar \beta_{\infty, \kappa} \mid H_\kappa^1)}{det(1 - q s \bar \beta_{\infty, \kappa}  \mid H^0_\kappa)}.
\]
If $\kappa \not= 0$ then $H_\kappa^0 = 0$, else when $\kappa = 0$ then dim $H_\kappa^0 = 1$ and $det(1 - q s \bar \beta_{\infty, \kappa}  \mid H^0_\kappa) = (1 - qs)$. Writing $L(Sym^{\infty, \kappa} Kl / \bb F_q, s) = \sum_{n \geq 0} c_n s^n$, then the $q$-adic Newton polygon satisfies
\[
ord_q c_n \geq n(n-1) \quad \text{for all } n \geq 0.
\]
When $p = 2$ and $\kappa \in 1 + 2 \bb Z_2$, then the 2-adic slopes of the reciprocal roots of $L(Sym^{\infty, \kappa} Kl, s)$ equal $2n$ for $n \geq 0$.
\end{theorem}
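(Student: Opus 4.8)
The strategy is to reconstruct, over any prime $p \geq 2$, the $p$-adic cohomology theory of \cite{MR2385246} (which was written for $p \geq 5$) and verify that none of the constructions secretly used the hypothesis $p \geq 5$. The backbone is Dwork's theory: one builds a splitting function $\theta(x) = \exp(\pi(x - x^p))$ attached to $\pi$ with $\pi^{p-1} = -p$ and $\zeta_p = 1 - \pi$, uses it to $p$-adically encode the additive character, and assembles a Frobenius operator $\bar\beta_{\infty,\kappa}$ acting on a space of overconvergent functions on the torus $\bb G_m$ twisted by the $\kappa$-th symmetric power of the rank-two Kloosterman local system. The trace formula then expresses $L(Sym^{\infty,\kappa} Kl/\bb F_q, s)$ as the alternating product of $\det(1 - s\,\bar\beta_{\infty,\kappa} \mid H^i_\kappa)$ for $i = 0, 1$. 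First I would set up the Dwork splitting function and the chain complex $0 \to C^0 \to C^1 \to 0$ computing $H^0_\kappa$ and $H^1_\kappa$, recording the completely continuous operator $\bar\beta_{\infty,\kappa}$ at each spot.

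Next I would identify the cohomology in degree zero. The space $H^0_\kappa$ is the invariants under the relevant differential (or boundary) operator, and the point is that it is nonzero precisely when $\kappa = 0$, in which case the constant functions survive and $H^0_\kappa$ is one-dimensional. A short computation with the symmetric-power twist shows that for $\kappa \neq 0$ the twisting kills the constants, giving $H^0_\kappa = 0$; when $\kappa = 0$ one checks directly that $\bar\beta_{\infty,0}$ acts on $H^0_0$ by multiplication by $q^{-1}$ relative to the normalization, so that $\det(1 - qs\,\bar\beta_{\infty,0} \mid H^0_0) = (1 - qs)$. This matches the Euler-factor bookkeeping in the statement.

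The Newton polygon bound $ord_q\, c_n \geq n(n-1)$ comes from estimating the $q$-adic valuations of the matrix entries of $\bar\beta_{\infty,\kappa}$ in a suitable monomial basis. The operator acts on (overconvergent) functions indexed by a lattice of exponents, and Dwork's estimates give that the entry sending a basis vector of ``degree'' $j$ to one of degree $n$ is divisible by $\pi$ to a power growing linearly in the degrees; taking the Fredholm determinant and extracting $c_n$ as a sum of $n \times n$ minors then forces the stated quadratic lower bound, exactly as in \cite{MR3694643} whose estimates I would cite as holding verbatim for $p \geq 2$. The sharp statement for $p = 2$, $\kappa \in 1 + 2\bb Z_2$ requires more: here I would use the congruence $-Kl_q(t) \equiv 1 \pmod{\pi^2}$ from Proposition \ref{P: Kl congruence} together with the modular interpretation of Theorem \ref{T: overconv interp} (identifying $H^1_\kappa$ with overconvergent cusp forms acted on by $U_2$) to pin the slopes to the exact arithmetic progression $2n$, $n \geq 0$, rather than merely bounding them below.

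The main obstacle, I expect, is verifying that the overconvergence radius and the completely continuous property of $\bar\beta_{\infty,\kappa}$ survive when $p = 2$ and $p = 3$. For small $p$ the splitting function $\theta$ has a smaller radius of convergence (the series $\exp(\pi t)$ converges on a disc of radius $|\pi|^{-1} = p^{-1/(p-1)}$, which is most restrictive exactly at $p = 2, 3$), so the delicate point is to confirm that the exponents still lie in the region where Dwork's lemma guarantees convergence of $\theta(x)\theta(t/x)$ and that the Frobenius matrix remains nuclear. Once this convergence bookkeeping is checked — and I claim it goes through because the estimates in \cite{MR2385246, MR3694643} only ever used $\pi^{p-1} = -p$ and never the inequality $p \geq 5$ — the rest of the argument is the standard trace formula plus minor estimation, and the sharpness at $p = 2$ follows cleanly from the modular picture.
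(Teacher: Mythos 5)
Your plan for the cohomological part coincides with the paper's: both arguments consist of revisiting the construction of \cite{MR2385246} and checking that the hypothesis $p \geq 5$ was never genuinely used, so that $\bar\beta_{\infty,\kappa}$ remains a completely continuous endomorphism of the relevant overconvergent space, the trace formula and the computation of $H^0_\kappa$ go through, and the quadratic lower bound $\mathrm{ord}_q\, c_n \geq n(n-1)$ falls out of the linear-in-degree valuation estimates on the matrix entries of $\bar\beta_{\infty,\kappa}$ via Fredholm minors. (One small slip: if $\bar\beta_{\infty,0}$ acted by $q^{-1}$ on $H^0_0$ you would get $\det(1 - qs\,\bar\beta_{\infty,0}\mid H^0_0) = 1 - s$, not $1 - qs$; the correct normalization has $\bar\beta_{\infty,0}$ acting by $1$ there.)

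The genuine gap is in the last claim, the \emph{exactness} of the slopes $2n$ when $p = 2$ and $\kappa \in 1 + 2\mathbb{Z}_2$. You propose to get this from the congruence $-Kl_q(t) \equiv 1 \bmod \pi^2$ of Proposition \ref{P: Kl congruence} together with the identification of $H^1_\kappa$ with $S^\dagger_{\kappa+2}(\Gamma_1(4))$. But the congruence mod $\pi^2 = 4$ is precisely the input that produces the \emph{lower} bound on the Newton polygon; it cannot by itself supply the matching upper bound, i.e.\ it cannot show that the Newton polygon actually touches the vertices $(n, n(n-1))$. The paper closes this gap with an external theorem: after the modular interpretation of Theorem \ref{T: overconv interp}, it invokes \cite{MR2106238} (the Buzzard--Kilford computation of the $U_2$-slopes on overconvergent $2$-adic forms for weights near the boundary of weight space, where for $\kappa+2$ odd the relevant weight parameter has $2$-adic valuation exactly $2$ and the slopes form the arithmetic progression $0, 2, 4, \dots$). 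Without this, or some equivalent explicit determination of the $U_2$-slopes on $S^\dagger_{\kappa+2}(\Gamma_1(4))$, your argument only reproves that the slopes are $\geq 2n$, and the final sentence of the theorem remains unestablished.
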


\begin{proof}
We refer the reader to \cite{MR2385246} for definitions and notation, specifically \cite[Section 4]{MR2385246}. In loc.cit., the only restriction on the prime $p$ came from the definition of $[\alpha_{\infty, a}]_\kappa$, and so the the cohomology theory in \cite[Section 4.1]{MR2385246} holds for all $p \geq 2$, which includes the result on $H_\kappa^0$. 

We now move to the map $[\bar \alpha_{\infty, a}]_\kappa$ in \cite[Section 4.3]{MR2385246}. First, we note that \cite[Lemma 4.2]{MR2385246} holds for all $p \geq 2$, with \cite[Theorem 3.5]{MR3572279} being one such proof. It follows that for all $p \geq 2$, $[\bar \alpha_{\infty, a}]_\kappa: \c S(\hat b, 1) \rightarrow \c S(\hat b/q, 1)$, where $\hat b := \frac{p}{p-1}$ and $\varepsilon = \hat b - \frac{1}{p-1} = 1$. Consequently, $\beta_{\infty, \kappa}$ is a well-defined endomorphism of $\c S(\hat b, 1)$ for all $p \geq 2$, and so 
\[
L(Sym^{\infty, \kappa} Kl / \bb F_q, s) = \frac{det(1 - s \bar \beta_{\infty, \kappa} \mid H_\kappa^1)}{det(1 - q s \bar \beta_{\infty, \kappa}  \mid H^0_\kappa)}.
\]
We move now to the $q$-adic Newton polygon. Suppose $\kappa \not= 0$. Here we follow the proof of \cite[Theorem 4.9]{MR2385246}. We have shown that $\beta_{\infty, \kappa, 1}$ is well-defined for all $p \geq 2$. Since $\gamma^{2n} t^n \in \c R(\hat b/p; 0) \subset \c S(\hat b/p, 1; 0)$, we have $[\bar \alpha_{\infty, 1}]_{\kappa}(\gamma^{2n} t^n) \in \c S(\hat b/p, 1; 0)$, and so $\beta_{\infty, \kappa, 1}(\gamma^{2n} t^n) \in \c R(\hat b, 1; 0)$. Thus by \cite[Theorem 4.7]{MR2385246}, $\bar \beta_{\infty, \kappa, 1}(\gamma^{2n}t^n) \in \c R(\hat b, 1; 0)$. This means, writing $\bar \beta_{\infty, \kappa, 1}(\gamma^{2n}t^n) = \sum_{m \geq 0} B(m,n) t^m = \sum_{m \geq 0} ( B(m,n) \gamma^{-2m}) \gamma^{2m} t^m$, we have
\[
ord_p ( B(m,n) \gamma^{-2m}) \geq 2\hat b m - \frac{2m}{p-1} = 2m.
\]
The lower bound on the $q$-adic Newton polygon now follows from the proof of \cite[Theorem 4.9]{MR2385246}.

Suppose now that $p = 2$ and $\kappa \in 1 + 2 \bb Z_2$. By Theorem \ref{T: overconv interp} and \cite{MR2106238} (see \cite{MR2135280} for a generalization), the lower bound is achieved:  $ord_p a_n = n(n-1)$ for every $n$.
\end{proof}

\bibliographystyle{amsplain}
\bibliography{../References/References.bib}

\end{document}